\theoremstyle{definition} 
\theoremstyle{plain}
\newtheorem{theorem}{\newline Theorem}[section]
\theoremstyle{definition}
\theoremstyle{definition}
\newtheorem{ex}{\newline Example}[section]
\theoremstyle{definition}
\theoremstyle{definition}
\theoremstyle{definition}
\title{Cryptanalysis of protocols using (Simultaneous) Conjugacy Search Problem in certain Metabelian Platform Groups}
\author{Delaram Kahrobaei$^{1,2,3,5}$, Carmine Monetta$^4$, Ludovic Perret$^6$, Maria Tota$^4$, Martina Vigorito$^4$}
\begin{document}

\maketitle
\begin{center}
$^1$ Department of Computer Science, University of York, UK\\
$^2$ Departments of Computer Science and Mathematics, Queens College, City
University of New York, USA\\
$^3$ Department of Computer Science and Engineering, Tandon School of Engineering,
New York University, USA\\
$^4$ Department of Mathematics, University of Salerno, IT\\
$^5$ Initiative for the Theoretical Sciences, Graduate Center, City University of New
York, USA\\
$^6$ Sorbonne University, CNRS, LIP6, PolSys, Paris, France
\end{center}

\begin{abstract}

There are many group-based cryptosystems in which the security relies on the difficulty of solving Conjugacy Search Problem (CSP) and Simultaneous Conjugacy Search Problem (SCSP) in their underlying platform groups.
In this paper we give a cryptanalysis of these systems which use certain semidirect product of abelian groups.
\end{abstract}

\section{Introduction}
The field of group-based cryptography began with the seminal work of Anshel, Anshel and Goldfeld in 1999 when they proposed a commutator key-exchange protocol based on the difficulty of simultaneus conjugacy search problem in certain classes of groups, namely braid groups \cite{AAG}. The search for the platform group for this protocol has been an active area including several cryptanalysis. For a survey on group-based cryptography in the quantum era see \cite{AMS} and book \cite{AMSB}.
Polycyclic group-based cryptography was introduced by Eick and Kahrobaei in \cite{EK}. More precisely, they proposed such groups as platform for the Commutator Key-Exchange Protocol, also known as Anshel-Anshel-Goldfeld (a.k.a. AAG) \cite{AAG}, as well as for the non-commutative Diffie-Hellman Key-Exchange Protocol (a.k.a. Ko-Lee) \cite{KoLee}. The security of these protocols relies on the difficulty to solve the Simultaneous Conjugacy Search Problem (SCSP) and the Conjgacy Search Problem (CSP) in some classes of groups. Their argument is based on experimental results for the CSP for certain metabelian polycyclic groups arising from field extensions. These groups are not virtually nilpotent,  hence the CSP cannot be solved using the analysis provided in \cite{MT}. Nevertheless, some of these groups can be avoided as platform since, in \cite{KU}, Kotov and Ushakov did a cryptanalysis for some groups of this type. A connected work is due to Gryak, Kahrobaei, and Martinez Perez who investigated another class of metabelian groups. Indeed, in \cite{GKM} they obtain a complexity result concerning the CSP which is proved to be at most exponential for the analyzed class of groups.

The methods used to test conjugacy decision problem are different and include experiments conducted with machine learning algorithms, as done by Gryak, Kahrobaei and Haralick, in \cite{GHK}, but also Length-based attack. Garber, Kahrobaei, and Lam, in \cite{GKL}, showed that the Length-based attack is inefficient for certain classes of metabelian polycyclic groups.  

There are other proposed cryptosystems based on the difficulty of CSP in certain classes of groups, (see the survey by Gryak and Kahrobaei \cite{GK}), for example Kahrobaei-Koupparis Digital Signature Scheme \cite{KK}, and Khan-Kahrobaei Non-commutative El Gamal Key-exchange \cite{KahKhan}.

In this paper we go further to the results Field-Based-Attack (FBA) in \cite{KU} and show how to cryptanalyze the CSP and SCSP for some other classes of metabelian groups. 
\\[10pt]

The authors in \cite{KU} investigated security properties of the Commutator Key-Exchange Protocol used with certain polycyclic groups. They showed that despite low success of the length based attack the protocol can be broken by a deterministic polynomial-time algorithm. They call this approach FBA and they implemented it in GAP to compare LBA and FBA.
\\[10pt]

In this paper we show that FBA could be generalized for protocols based on the difficulty of CSP and SCSP in
certain classes of metabelian groups. In particular we prove the followings theorems:\\
Theorem \ref{theoaag}: Let $G=M\ltimes N$, where $M\cong\mathbb{Z}^n$ and $N=\mathbb{Z}[m_1^{\pm1},\ldots,m_n^{\pm1}]$ (as additive groups), with $m_1,\ldots,m_n$ positive integers, then there exists a polynomial-time algorithm to break Commutator Key-Exchange protocol for such a group $G$.\\
Theorem \ref{theodh}: Let $G=M\ltimes N$, where $M\cong\mathbb{Z}^n$ and $N=\mathbb{Z}[m_1^{\pm1},\ldots,m_n^{\pm1}]$ (as additive groups), with $m_1,\ldots,m_n$ positive integers, then there exists a polynomial-time algorithm to break Diffie-Hellmann Key-Exchange protocol for such a group $G$.
\\[10pt]


This paper is structured as follow: in Section \ref{sec2}, we recall the definitions of Conjugacy Search Problem and Simultaneous Conjugacy Search Problem and we describe some Key-Exchange Protocols such as Non-commutative Diffie-Hellman and nshel-Anshel-Goldfeld Commutator. Section \ref{sec3} presents the family of metabelian groups we are interested in with some examples. In Section \ref{nondifproblems} we prove the main result i.e. how to cryptanalyze the CSP and SCSP in such platform groups and we provide the proofs of Theorem \ref{theoaag} and Theorem \ref{theodh}. The conclusions of our work are in Section \ref{conclusion}.



\section{Background}
\label{sec2}
\subsection{(Simultaneous) Conjugacy Search Problem}
We start out by giving a brief description of two group-theoretic algorithmic problems on which the security of a number of protocols is based. Here and in the following, if $x$ and $g$ are group-elements, the conjugate of $g$ by $x$, which is denoted by $g^x$, is the element $x^{-1}gx$.
\\[10pt]
\textbf{The Conjugacy Search Problem} (CSP): Let $G$ be a finitely presented group such that the conjugacy decision problem is solvable. Given $g\in G$ and $h=g^x$ for some $x\in G$, the \textit{Conjugacy Search Problem} asks to search such an element $x\in G$. 
\\[10pt]
\textbf{The Simultaneous Conjugacy Search Problem} (SCSP): Given a finitely presented group $G$ and
$g_1,\dots ,g_n$, $h_1,\dots ,h_n$ elements of $G$ such that $h_i=g_i^x$, for all $i\in \{1,\dots , n\}$ and some $x\in G$, the \textit{Simultaneous Conjugacy Search Problem} asks to recover such an element $x\in G$.
\\[10pt]
Please note that CSP and SCSP are always solvable since we assume that the decision conjugacy problem is solvable in the definitions of these problems.
Also, a solution of $g^x=h$ is not unique. In fact, given a solution $x$, the set of solutions is $\{ax\ :\ a\in C_G(g)\}$.\\
Examples of  well known protocols whose security is based on the difficulty of solving the CSP or the SCSP are the non-commutative Diffie-Hellman (a.k.a Ko-Lee) Key-Exchange Protocol and the Anshel-Anshel-Goldfeld Commutator Key-Exchange Protocol. We recall these protocols below.

\subsection{Non-commutative Diffie-Hellman (a.k.a. Ko-Lee) Key Exchange Protocol}
\label{DH}
Originally proposed by Ko, Lee, et al. \cite{KoLee} using braid groups, their non-commutative analogue of Diffie-Hellman key exchange can be generalized to work over other platform groups. Let $G$ be a finitely presented group, with $A,B \leq G$ such that all elements of $A$ and $B$ commute. 
\

An element $g\in G$ is chosen, and $g, G, A, B$ are made public. A shared secret can then be constructed as follows:
\begin{itemize}
\item Alice chooses a random element $a\in A$ and sends $g^{a}$ to Bob.
\item Bob chooses a random element $b\in B$ and sends $g^{b}$ to Alice.
\item The shared key is then $g^{ab}$, as Alice computes $(g^b)^a$, which is equal to Bob's computation of $(g^a)^b$ as $a$ and $b$ commute.
\end{itemize}

The security of such a protocol is based on the difficulty to get $a$ and $b$, which are private, from public information $g,g^a$ and $g^b$. That is to solve the conjugacy equations 
$$g^x=h\ \ \ \ \ \text{and}\ \ \ \ \ g^y=h'$$
where $h=g^a$ and $h'=g^b$.
In other words, the security of Ko-Lee rests upon solving the conjugacy search problem within the subgroups $A, B$.

\subsection{Anshel-Anshel-Goldfeld Commutator (a.k.a. AAG) Key-Exchange Protocol} 
\label{arit}
The Anshel-Anshel-Goldfeld Commutator Key-Exchange Protocol \cite{AAG} is a two-party protocol performed as follows:
\begin{itemize}
    \item Fix a finitely presented group $G$, called the platform group, a set of generators $g_1,\dots , g_k$ for $G$ and some positive integers $n_1, n_2, l, m$. All this information are made public.
    \item Alice prepares a tuple of elements $\bar{a} = (a_1,\dots, a_{n_1})$ called Alice’s public tuple. Each $a_i$ is generated randomly as a product of $g_i$’s and their inverses.
    \item Bob prepares a tuple of elements $\bar{b} = (b_1,\dots, b_{n_2})$ called Bob’s public tuple. Each $b_i$ is generated randomly as a product of $g_i$’s and their inverses.
    \item Alice generates a random element $A$ as a product $a_{s_1}^{\epsilon_1}\cdots a_{s_l}^{\epsilon_l}$ of $a_i$'s and their inverses. The element $A$ (or more precisely its factorization) is called the Alice’s private element.
    \item Bob generates a random element $B$ as a product $b_{t_1}^{\delta_1}\cdots b_{t_m}^{\delta_m}$ of $b_i$'s and their inverses. The element $B$ (or more precisely its factorization) is called the Bob’s private element.
    \item Alice publishes the tuple of conjugates $\bar{b}^A=(A^{-1}b_1A,\dots,A^{-1}b_{n_2}A)$.
    \item Bob publishes the tuple of conjugates $\bar{a}^B=(B^{-1}a_1B,\dots,B^{-1}a_{n_1}B)$.
    \item Finally, Alice computes the element $K_A$ as a product:
\[
A^{-1}(B^{-1}a_{s_1}^{\epsilon_1}B\cdots B^{-1}a_{s_l}^{\epsilon_l}B)=A^{-1}B^{-1}AB=[A,B]
\]
using the elements of Bob’s conjugate tuple $\bar{a}^B$.
\item Similarly, Bob computes the element $K_B$ as a product:
\[
(A^{-1}b_{t_1}^{\delta_1}A\cdots A^{-1}b_{t_m}^{\delta_m}A)^{-1}B=A^{-1}B^{-1}AB=[A,B]
\]
using the elements of Alice’s conjugate tuple $\bar{b}^A$.
\item The shared key is then $K=K_A=K_B=[A,B]$.
\end{itemize}
The security of such a protocol is based on the fact that it is difficult to recover $A$ and $B$ from $\bar{a},\bar{b},\bar{b}^A$ and $\bar{a}^B$, which are public.
In practice, if $\bar{b}^A=(b_1',\ ,b_{n_2}')$ and $\bar{a}^B=(a_1',\dots ,a_{n_1}')$, it is achieved by solving a system of conjugacy equations for $A$ and $B$:
\begin{equation}
\label{sistema1}
\Bigg \{
\begin{array}{ll}
X^{-1} b_1 X=b_1' \\
\dots \\
X^{-1} b_{n_2} X=b_{n_2}' \\
\end{array}
\end{equation}
\begin{equation}
\label{sistema2}
\Bigg \{
\begin{array}{ll}
Y^{-1} a_1 Y=a_1' \\
\dots \\
Y^{-1} a_{n_1} Y=a_{n_1}' \\
\end{array}
\end{equation}
This means that the security of AAG rests upon solving the simultaneous conjugacy search problem in $G$.

\section{Examples of Metabelian Groups}
\label{sec3}
Here we describe some families of metabelian groups whose CSP and SCSP will be discussed in the next section. To be more precise, we are interested in groups $G$ of the form $G=M\ltimes N$, with both groups $M$ and $N$ abelian. We use multiplicative notation for the whole group $G$ but additive notation for $N$. So if $s\in M$ and $c\in N$, the action of the element $s$ maps $c$ to
$$ c \cdot s \text{ with additive notation or,}$$
$$c^s=s^{-1}cs\text{ with multiplicative notation.}$$
This kind of groups are metabelian and arise quite naturally in linear algebra and ring theory, as we will show in more details in the following examples.

\begin{ex}\label{KoUk}
In \cite{KU}, Kotov and Ushakov studied the security of AAG protocol for some polycyclic platform groups. More precisely they considered the group $M$ as the multiplicative group of a specific field $F$ and the group $N$ as the additive group of the same field $F$; hence $G=F^*\ltimes F$. To construct $F$ they considered an irreducible monic polynomial $f(x)\in \mathbb{Z}[x]$ and put:
\begin{equation}
\label{specific field}
F=\mathbb{Q}[x]/(f).
\end{equation}
If $a\in F^*$ and $b\in F$, the action of $a$ maps $b$ to $b\cdot a$. They showed that in such a group it is possible to reduce the systems (\ref{sistema1}) and (\ref{sistema2}) to two systems of linear equations over the field $F$. Then there exist conditions under which each system has a unique solution.
\end{ex}

\begin{ex} 
\label{vectorspace}
Let $V(+,\cdot)$ be a vector space over a field $F$. Take the group $M$ as the multiplicative group $F^*$ of $F$ and the group $N$ as the additive group of $V$. If $\lambda\in F^*$ and $v\in V$, the action of $\lambda$ maps $v$ to $v\cdot \lambda$. Hence $G=F^*\ltimes V$ has the same structure of the general group we considered before. 
Notice that, for $V=F$, if $F$ is of the form described in (\ref{specific field}) we obtain the same example we found in \cite{KU}. 
Similarly we could start with a module over a commutative unitary ring.     
\end{ex}

Such examples are interesting from a mathematical point of view but more practical examples, as they have been described in \cite{GKM},  follow.

\begin{ex} \label{split} Split metabelian groups of finite Pr\"ufer rank. We will focus in the case when the group $G$ is given by a presentation of the form
$$G=\langle q_1,\ldots,q_n,b_1,\ldots,b_s\mid [q_i,q_j]=1,[b_l,b_t]=1,b_i^{q_l}=q_lb_iq_l^{-1}=b_1^{m_{1i}}b_2^{m_{2i}}\ldots b_s^{m_{si}}\rangle.$$
Observe that $q_1,\ldots,q_n$ generate a free abelian group which we denote by $M$ and  $b_1,\ldots,b_s$ generate the abelian group $N$ as normal subgroup of $G$. Then $G=M\ltimes N$. Under these conditions one can show that there is an embedding $N\to\mathbb{Q}^s$ mapping the family $b_1,\ldots,b_s$ to a free basis of $\mathbb{Q}^s$. This means that our group is torsion free metabelian of finite Pr\"ufer rank (meaning that the number of generators needed to generate any finitely generated subgroup is bounded). Observe that the action of $M$ on $N$ can be described using integer matrices: the action of $q_l$ is encoded by the $(s\times s)$-matrix $M_l$ with columns $m_{1i},\ldots,m_{s_i}$.
Moreover $G$ is polycyclic if and only if the matrices $M_i$ can be taken to be integer matrices with integral inverses \cite{AuslanderHall}.

One of the main advantages of these groups is that they admit the following fairly simple set of normal forms:
$$q_1^{\alpha_1}\ldots q_n^{\alpha_n}b_1^{\beta_1}\ldots b_s^{\beta_s}q_1^{\gamma_1}\ldots q_n^{\gamma_n}.$$
with $\gamma_1,\ldots,\gamma_n>0$.
Moreover there is an efficient algorithm (collection) to transform any word in the generators to the corresponding normal form: given an arbitrary word in the generating system,  move all of the instances of $q_i$ with negative exponent to the left and all the instances of $q_i$ with positive exponents to the right.
\end{ex}
\label{genBS}
\begin{ex} \label{genBS} Generalized metabelian Baumslag-Solitar groups. {\rm Let $m_1,\ldots,m_n$ be positive integers. We call the group given by the following presentation a {\sl generalized metabelian Baumslag-Solitar group} 
$$G=\langle q_1,\ldots,q_n,b\mid [q_i,q_j]=1, b^{q_i}=b^{m_i},i,j=1,\ldots,n\rangle.$$
It is a constructible metabelian group of finite Pr\"ufer rank and $G\cong M\ltimes N$ with $M=\langle q_1,\ldots,q_n\rangle\cong\mathbb{Z}^n$ and $N=\mathbb{Z}[m_1^{\pm1},\ldots,m_n^{\pm1}]$ (as additive groups).
} In \cite{GKM}, the authors showed the CSP in such groups reduce to the Discrete Logarithm Problem.
\end{ex}

\begin{ex}\label{galois}{\rm Let $L:\mathbb{Q}$ be a Galois extension of degree $n$ and fix an integer basis $\{u_1,\ldots,u_k\}$ of $L$ over $\mathbb{Q}$.
Then $\{u_1,\ldots,u_k\}$ freely generates the maximal order $\mathcal{O}_L$ as a $\mathbb{Z}$-module.

Now, we choose integer elements, $q_1,\ldots,q_n$, generating a free abelian multiplicative subgroup of $L-\{0\}$. Each $q_i$ acts on $L$ by left multiplication and using the  basis $\{u_1,\ldots,u_k\}$,  we may represent this action by means of an integer matrix $M_i$. Let $N$ be the smallest sub $\mathbb{Z}$-module of $L$ closed under multiplication with the elements $q_i$ and  $q_i^{-1}$  and such that $\mathcal{O}_L\subseteq N$, i.e.,
$$N=\mathcal{O}_L[q_1^{\pm 1},\ldots,q_n^{\pm1}].$$
We then may define $G=M\ltimes N$ where the action of $M$ on $N$ is given by multiplication by the elements $q_i$. {The generalized Baumslag-Solitar groups of the previous example are a particular case of this situation for $L=\mathbb{Q}$.} If the elements $q_i$ lie in $\mathcal{U}_L$ which is the group of units of $\mathcal{O}_L$, then the group $G$ is polycyclic.}
 \end{ex}

\section{Cryptanalysis of the Commutator and the Non-Commutative Diffie-Hellman key exchange Protocols}
\label{nondifproblems}

In this section, we show that the AAG and the Ko-Lee Key Exchange Protocols are not suitable in the case of the generalised metabelian Baumslag-Solitar groups (Example \ref{genBS}). Similar arguments can be used with minor modifications for the other examples in Section \ref{sec3}.
\\[10pt]
We begin studying the CSP and SCSP in a metabelian group of the form $G=M \ltimes N$, as described in Section \ref{sec3}. Assume that we have conjugated elements $g,h\in G$ and we want to solve the CSP for $g$, $h$, i.e., we want to find 
$x\in G$ such that 
$$g^x=h.$$

We put $g=sc$, $h=s'c'$ and $x=td$, where $s,s',t\in M$ and $c,c',d\in N$. Then
$$g^x=x^{-1}gx=d^{-1}t^{-1}sctd=d^{-1}st^{-1}ctd=s(d^{-1})^sc^td.$$
Now $g^x=h$ implies $s'=s$ and $c'=(d^{-1})^sc^td$. Since  the element $(d^{-1})^sc^td$ belongs to $N$ we can write it additively as
$$-d\cdot s +c\cdot t + d = d \cdot (1-s) + c \cdot t.$$
This means that the CSP above 
is equivalent to the problem of finding $t\in M$ and $d\in N$ such that
\begin{equation}\label{equation}d \cdot (1-s)+ c\cdot t=c',\end{equation}
where $s \in M$ and $c,c' \in N$ are given.
\\[10pt]
In particular, if we need to face the SCSP, which means to solve system \eqref{sistema1}, we can apply the reduction process described above. Then, if we put $b_i=s_ic_i$, $b_i'=s_i'c_i'$ with $s_i,s'_i\in M$ and $c_i,c_i'\in N$, for all $i\in\{1,\dots ,n_2\}$, and $X=td$ with $t\in M,\ d\in N$ we will get the following system of equations 

\begin{equation}
\label{sistema1ridotto}
\Bigg \{
\begin{array}{ll}
d \cdot (1-s_1)+ c_1 \cdot t = c_1' \\
\dots \\
d \cdot (1-s_{n_2} ) +c_{n_2} \cdot t=c_{n_2}'  \\
\end{array}
\end{equation}
where $s_i \in M$ and $c_i, c_i' \in N$ are given and we need to find $t \in M$ and $d \in N$.
\\[10pt]

Then the next results follow.\\
We start analyzing the cryptanalysis of AAG protocol in a generalized metabelian Baumslag-Solitar groups, as described in Example \ref{genBS}. 

\begin{theorem}
Let $G=M\ltimes N$, where $M\cong\mathbb{Z}^n$ and $N=\mathbb{Z}[m_1^{\pm1},\ldots,m_n^{\pm1}]$ (as additive groups), with $m_1,\ldots,m_n$ positive integers, then there exists a polynomial-time algorithm to break Commutator Key-Exchange protocol for such a group $G$.
\label{theoaag}
\end{theorem}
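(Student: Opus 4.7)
The plan is to exploit the reduction (\ref{sistema1ridotto}) established earlier in the section, together with the specific structure of $N$ as a subring of $\mathbb{Q}$, to reduce the SCSP arising in the AAG attack to a linear algebra problem over $\mathbb{Q}$. Given Alice's public data $(\bar{b},\bar{b}^{A})$, I would first write each $b_i=s_ic_i$ and $b_i'=s_ic_i'$ and set up the system
\begin{equation*}
d\cdot(1-s_i)+c_i\cdot t=c_i',\qquad i=1,\dots,n_2,
\end{equation*}
with unknowns $t\in M$ and $d\in N$. The key observation is that because $M\cong\mathbb{Z}^n$ acts on $N\subseteq\mathbb{Q}$ by multiplication, every action $s_i$ is scalar multiplication by a known rational $\mu_i=m_1^{s_{i,1}}\cdots m_n^{s_{i,n}}\in\mathbb{Q}^{\ast}$, and the unknown action of $t=q_1^{t_1}\cdots q_n^{t_n}$ is multiplication by $T=m_1^{t_1}\cdots m_n^{t_n}\in\mathbb{Q}^{\ast}$.

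Next, I would treat $T$ and $d$ as unknowns in $\mathbb{Q}$. The reduced system becomes
\begin{equation*}
(1-\mu_i)\,d+c_i\,T=c_i',\qquad i=1,\dots,n_2,
\end{equation*}
which is \emph{linear} in the two rational unknowns $d$ and $T$. Gaussian elimination over $\mathbb{Q}$ solves it in polynomial time, and over-determination for $n_2\geq2$ forces uniqueness whenever the coefficient matrix has rank two, which will happen for generic public tuples. From $T\in\mathbb{Q}^{\ast}$ I then recover the integer exponents $(t_1,\dots,t_n)$: since $m_1,\dots,m_n$ are fixed (small) parameters of the platform, their prime factorisations are known, so writing $T$ in lowest terms and reading off the $p$-adic valuations for each prime $p\mid m_1\cdots m_n$ yields an integer linear system $v_p(T)=\sum_j t_j v_p(m_j)$, solvable in polynomial time by Hermite normal form. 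This gives $t$, and together with $d$ reconstructs the SCSP solution $X=td$.

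Running the identical procedure on Bob's published tuple $(\bar{a},\bar{a}^{B})$ gives an analogous element $Y$ acting on $\bar{a}$ exactly as $B$ does. The final step is to assemble the shared key $[A,B]$ from $X$ and $Y$. Writing $X=s_Xc_X$ and $Y=s_Yc_Y$ and using that $[G,G]\subseteq N$ in a metabelian group, a direct computation gives the closed form $[X,Y]=c_X(s_Y-1)+c_Y(1-s_X)\in N$, evaluated in $\mathbb{Q}$ via the multiplicative action. Provided the SCSP systems have rank two (the generic case), $X=A$ and $Y=B$ as group elements and this recovers $[A,B]$ exactly.

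The main obstacle I anticipate is not the linear algebra itself but the \emph{exponent recovery} step: although the linear system over $\mathbb{Q}$ delivers the scalar $T$ cheaply, translating $T$ back into an element of $M$ is a multiplicative/discrete-logarithm-flavoured task, and polynomial-time correctness depends on the $m_i$ being part of the fixed group description rather than adversarial input. A secondary subtlety is ruling out centraliser ambiguity in the SCSP solution; I would address this by arguing that, in the metabelian setting, any two solutions of (\ref{sistema1ridotto}) differ by an element whose $M$-component is trivial, and that such an ambiguity is absorbed by the commutator computation above, so that $[X,Y]=[A,B]$ regardless of which solution is returned.
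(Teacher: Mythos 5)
Your proposal is correct and follows essentially the same route as the paper: both reduce the SCSP to the linear system (\ref{sistema1ridotto}) viewed over the subring $N\subseteq\mathbb{Q}$ (identifying the unknown $t\in M$ with the rational scalar it acts by) and solve it by standard linear algebra in polynomial time, with uniqueness under the same generic rank-two condition that the equations not be proportional. The additional steps you include --- recovering the exponent vector of $t$ from $T$ via $p$-adic valuations, and the explicit commutator formula $[X,Y]=c_X\cdot(s_Y-1)+c_Y\cdot(1-s_X)$ --- are correct but go beyond what the paper records, which stops once $t$ and $d$ are determined as rationals.
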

\begin{proof}
    In AAG protocol the attacker knows $b_1^X,b_2^X,\ldots,b_{n_2}^X$ for some $b_1,\ldots,b_{n_2}$ (which are public) and $n_2>1$. To find $X=td$, with $t \in M$ and $d \in N$, the attacker has to solve several equations as (\ref{sistema1ridotto}). Let us consider two of them

$$d \cdot (1-s)+ c\cdot t=c'$$
$$d \cdot (1-\tilde s)+\tilde c\cdot  t=\tilde c'.$$

Here $s,\tilde s, c, \tilde c,c',\tilde c'$ are known and the attacker has to find $t$ and $d$.
Recall that $c',\tilde c', c, \tilde c,d$ lie in $N$ which is a subring of $\mathbb{Q}$. If we identify $s$ and $t$ with the integer they act by, then they also lie in $N$. So the above can be seen as a system of two equations in $N$, moreover we know a priori that the system has a solution. This means that unless the second equation is a multiple of the first one, this solution is unique and the standard procedure to solve the system yields then the suitable value of $t$ and $d$ in polynomial time (see \cite[Section~2]{GKM}).
\end{proof}

 The argument in the previous proof applies also when $G$ is as described in Example \ref{vectorspace}, choosing $V=F^n$ with $n\in\mathbb N$.
\\[10pt]
Next, let us move to the non-commutative Diffie-Hellmann key exchange protocol (Section \ref{DH}). 
\begin{theorem}
    Let $G=M\ltimes N$, where $M\cong\mathbb{Z}^n$ and $N=\mathbb{Z}[m_1^{\pm1},\ldots,m_n^{\pm1}]$ (as additive groups), with $m_1,\ldots,m_n$ positive integers, then there exists a polynomial-time algorithm to break Diffie-Hellmann Key-Exchange protocol for such a group $G$.
    \label{theodh}
\end{theorem}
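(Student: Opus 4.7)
The plan is to adapt the reduction of Theorem~\ref{theoaag} to the Ko-Lee setting. Write $g=sc$, $a=t_ad_a$, $b=t_bd_b$ with $s,t_a,t_b\in M$ and $c,d_a,d_b\in N$, and set $g^a=s\alpha$, $g^b=s\beta$. First I would apply equation~(\ref{equation}) to each public conjugate of $g$, obtaining
\begin{equation*}
\alpha=d_a\cdot(1-s)+c\cdot t_a,\qquad \beta=d_b\cdot(1-s)+c\cdot t_b.
\end{equation*}
A further application of the same reduction to $(g^a)^b$ shows that the shared key is $g^{ab}=s\cdot K$ with
\begin{equation*}
K=(d_b+d_a\cdot t_b)(1-s)+c\cdot t_at_b=\beta+(\alpha-c)\cdot t_b,
\end{equation*}
so to recover the shared key it suffices to recover $t_b$ (equivalently, $t_a$).

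Next I would exploit that $a\in A$, $b\in B$ with $[A,B]=1$. The identity $ab=ba$ translates into the extra constraint $d_a(t_b-1)=d_b(t_a-1)$ in $N$; substituting $d_a$ and $d_b$ from the two reduction equations above yields the public relation
\begin{equation*}
(\alpha-c)\cdot t_b\;-\;(\beta-c)\cdot t_a\;=\;\alpha-\beta
\end{equation*}
in $N\subseteq\mathbb{Q}$, whose coefficients depend only on the public elements $s,c,\alpha,\beta$.

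Finally, since $t_a$ and $t_b$ are units of $N=\mathbb{Z}[m_1^{\pm 1},\ldots,m_n^{\pm 1}]$---concretely, signed products of the $m_i$ with integer exponents---the last equation can be solved by the same standard linear-algebra procedure in $N$ used in the proof of Theorem~\ref{theoaag} (compare \cite[Section~2]{GKM}). Extracting $t_b$ and substituting into $K=\beta+(\alpha-c)\cdot t_b$ then yields the shared key. The main obstacle is the direct analogue of the non-proportionality condition in the AAG case: one has to guarantee that the combined equations determine $t_b$ uniquely rather than only up to a one-parameter family. Ruling out these degenerate protocol instances is where the argument requires the most care; generically the solution is unique and is found in polynomial time, completing the cryptanalysis.
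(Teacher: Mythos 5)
Your computations up to the displayed public relation are correct: the identity $K=\beta+(\alpha-c)\cdot t_b$ does show that the shared key is determined by $t_b$ alone, and the relation $(\alpha-c)\cdot t_b-(\beta-c)\cdot t_a=\alpha-\beta$ does follow from $ab=ba$ after multiplying $d_a\cdot(t_b-1)=d_b\cdot(t_a-1)$ by $(1-s)$ and substituting $d_a\cdot(1-s)=\alpha-c\cdot t_a$, $d_b\cdot(1-s)=\beta-c\cdot t_b$. The gap is the last step. What you have produced is a single linear equation in the two unknowns $t_a,t_b$; over $\mathbb{Q}$ its solution set is a one-parameter family whenever both coefficients $\alpha-c$ and $\beta-c$ are nonzero, which is the generic situation, not a degenerate one that can be ruled out. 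Moreover distinct admissible values of $t_b$ yield distinct candidate keys $\beta+(\alpha-c)\cdot t_b$, so you cannot simply pick any point on that line. The only constraint that could restore uniqueness is that $t_a,t_b$ range over the discrete multiplicative group generated by $m_1,\ldots,m_n$, but then you are no longer doing linear algebra: you are solving an $S$-unit-type equation $u\,t_b-v\,t_a=w$ over that group, and the ``standard procedure'' invoked in Theorem~\ref{theoaag} neither applies nor gives a polynomial-time bound. As written, the attack does not terminate with the key.

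The missing ingredient, and the point where the paper's argument diverges from yours, is that in Ko-Lee the commuting subgroups $A$ and $B$ are \emph{public}; the paper exploits their structure rather than the single relation $ab=ba$. Its proof is a case analysis on $A$: if $A\subseteq M$ then $d_a=0$ and $\alpha=c\cdot t_a$ yields $t_a$ by one division; if $A\subseteq N$ then $t_a=1$ and $\alpha=d_a\cdot(1-s)+c$ yields $d_a$ directly; otherwise $A$ must lie in a conjugate complement $M^r$, where $r$ is computable from the public description of $A$ (after enlarging $N$ to $\tilde N=N\otimes_{\mathbb{Z}}\mathbb{Q}=\mathbb{Q}$, where the vanishing of $\mathrm{H}^1(M,\tilde N)$ guarantees that every complement is conjugate to $M$), and substituting $d_a=r\cdot(1-t_a)$ into equation~(\ref{equation}) leaves one equation in the one unknown $t_a$. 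In every case public information eliminates one unknown \emph{before} solving, which is exactly what your derivation lacks. Your identity $K=\beta+(\alpha-c)\cdot t_b$ is a clean way to finish, but it must be preceded by this case analysis (applied to $B$) to pin down $t_b$.
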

\begin{proof}
    In Ko-Lee protocol the main problem is that Alice and Bob must agree on a set $\Omega$ of pairwise commuting elements and then choose their conjugators $a$ and $b$ from that set. Recall that we are denoting $G=M\ltimes N$. As $M$ is abelian a possible choice would be $\Omega=M$, and if $a$ lies in $M$ then the attacker can find $a$ from $g^{a}$ in polynomial time. 
Another possibility would be to choose $a,b\in N$. But then $a=d$ and equation (\ref{equation}) for $g^{a}$ is 
$$d \cdot (1-s)+c=c',$$
and the only unknown is $d$ which can be found easily in polynomial time (see \cite[Section~3]{GKM}). 

In the case when $a$ is an arbitrary element not in $M$ or $N$, $\Omega$ must be a subset of the centralizer $C_G(a)$ of $a$ in $G$. 

 Things are particularly easy in the case when  the element $a$ belongs to $M^r$ for some $r\in N$, which happens if and only if
 $$a=td=t_1^r=r^{-1}t_1r=t_1 t_1^{-1}r^{-1}t_1r=t_1 r^{-t_1} r,$$
 for some $t,t_1 \in M$ and $d \in N$.
 Additively this is equivalent to 
 $$d=r- r\cdot t=r \cdot (1-t).$$
 It is a standard fact that $M^r=\{x\delta(x)\mid x\in M\}$ where $\delta$ is the inner derivation given by $\delta(x)=r \cdot (1-x)$. In this case it is easy to check that
 $$\Omega\subseteq C_G(a)=M^r.$$
 If the attacker has the extra information that $a$ belongs to $M^r$ for some $r$, then  the equation that he has to solve is 
 $$ r \cdot (1-t)(1-s)+c\cdot t=c'$$
 equivalently
$$(c-r+ r\cdot s)\cdot t=c'-r+s\cdot r.$$
This can be seen as an equation in $\mathbb{Q}$ and only requires to perform the quotient of $c'-r+ r\cdot s$ by $c-r+ r\cdot s$ thus can be solved in polynomial time (see \cite[Section~3]{GKM}).

Moreover, we are going to see now that by embedding our group $G$ in a bigger group we may always assume that $a$ lies in some conjugated subgroup of $M$. Let $\tilde G= M\ltimes\tilde N$ where $\tilde N=N\otimes_\mathbb{Z} \mathbb{Q}=\mathbb{Q}$. Then $a=td$ lies in $M^r$ for some $r\in\tilde N$ if and only if $d=r\cdot (1-t).$ This can always be solved in $\mathbb{Q}$, in other words, we can always find a suitable $r\in\mathbb{Q}$. Then one proceeds as we did before with this $r$. The fact that $r$ might not belong to $N$ does not create any troubles: recall that we are dealing not with the conjugacy problem but with the conjugacy {\sl search} problem, meaning that we know a priory that our equations have a solution so the procedure above yields the right values of $t,d$ even if $r$ does not belong to $N$.\\
Observe that behind what we said above is the fact that for the group $\tilde G$, the first cohomology group $\rm{H}^1(M,\tilde N)$ is zero, thus all the complements of $\tilde N$ in $\tilde G$ are conjugated.
\end{proof}

Notice that exactly the same argument in the previous proof happens for any group $G=M\ltimes N$ with $N\subseteq\mathbb{Q}^n$ for some $n$, so can be extended to the more general version of our groups (Example \ref{split}).



\section{Conclusion}
\label{conclusion}
In this paper we do cryptanalysis for the CSP and SCSP in certain metabelian groups. In particular we show the following.
\begin{enumerate}
    \item The generalized metabelian
Baumslag-Solitar groups can not be used as platform groups in commutator key-exchange protocol.
\item The generalized metabelian
Baumslag-Solitar groups can not be used as platform groups in non-commutative Diffie-Hellman protocol.
\end{enumerate}
Finally we want to point out that this cryptanalysis could be extended to the other examples in Section \ref{sec3} and to all cryptosystems based on the difficulty of CSP and SCSP.
\section*{Acknowledgement}
DK thanks the University of Salerno (Italy), where most of this paper was discussed and written. We thank Professor Conchita Martinez-Perez for fruitful discussions. MV thanks Initiative for the Theoretical Sciences at CUNY GC which hosted her Fall 2022.


\end{document}